\newtheorem{theorem}{Theorem}
\newtheorem{lemma}[theorem]{Lemma}
\providecommand{\abs}[1]{\lvert#1\rvert}
\newcommand{\dist}{\mbox{\rm dist\/}}
\begin{document}

\begin{frontmatter}

\title{Metric Spaces in Which Many Triangles Are Degenerate}

\author[fn1]{Va\v sek Chv\' atal}
\author[fn4]{No\' e de Rancourt}
\author[fn2]{Guillermo Gamboa Quintero}
\author[fn3]{Ida Kantor}
\author[fn5]{P\' eter G.N. Szab\' o}

\fntext[fn1]{Department of Computer Science and Software Engineering, Concordia University, Montreal (Emeritus) and 
 Department of Applied Mathematics, Charles University, Prague (Visiting).
Supported by NSERC  grant RGPIN/5599-2014 and by H2020-MSCA-RISE project CoSP-GA No. 823748.
\\ {\em E-mail address:} {\urlstyle{tt} chvatal@cse.concordia.ca}}

\fntext[fn4]{ {Univ. Lille, CNRS, UMR 8524 – Laboratoire Paul Painlevé, F-59000 Lille, France. Supported by the Labex CEMPI (ANR-11-LABX-0007-01).\\ \em E-mail address:} {\urlstyle{tt} nderancour@univ-lille.fr} }

\fntext[fn2]{ {Computer Science Institute of Charles University, Prague.
Supported by GA\v{C}R grant 2-17398S. 
\\ \em E-mail address:} {\urlstyle{tt} gamboa@iuuk.mff.cuni.cz}}

\fntext[fn3]{ {Computer Science Institute of Charles University, Prague.
Supported by GA\v{C}R grant 22-19073S and by Charles University project UNCE/SCI/004.
\\ \em E-mail address:} {\urlstyle{tt} ida@iuuk.mff.cuni.cz}} 

\fntext[fn5]{Department of Engineering Mathematics, Faculty of Engineering and Information Technology, University of P\'ecs, P\'ecs, Hungary and Department of Computer Science and Information Theory, Budapest University of Technology and Economics, Hungary
\\ {\em E-mail address:} {\urlstyle{tt} szape@cs.bme.hu}}

\begin{abstract}
Richmond and Richmond ({\sc American Mathematical Monthly} {\bf 104} (1997), 713--719) proved the following theorem: If, in a metric space with at least five points, all triangles are degenerate, then the space is isometric to a subset of the real line. We prove that the hypothesis is unnecessarily  strong: In a metric space on $n$ points, fewer than $7n^2/6$ suitably placed degenerate triangles suffice. However, fewer than $n(n-1)/2$ degenerate triangles, no matter how cleverly placed, never suffice. 
\end{abstract}
\end{frontmatter}

Given a metric space $(V,\dist)$, we follow \cite{ACH} in writing $[rst]$ to signify that 
$r,s,t$ are pairwise distinct points of $V$ and $\dist(r,s)+\dist(s,t)=\dist(r,t)$. Following~\cite{RiRi97}, we refer to three-point subsets of $V$ as {\em triangles;\/} if $[rst]$, then the triangle $\{r,s,t\}$ is called {\em degenerate.\/} If a metric space is isometric to a subset of the real line with the usual Euclidean metric, then all its triangles are degenerate. Richmond and Richmond~\cite{RiRi97} proved the converse under a mild lower bound on the number of points in the metric space:
\begin{theorem}[Richmond and Richmond]\label{riri}
If, in a metric space with at least five points, all triangles are degenerate, then this metric space is isometric to a subset of the real line with the usual Euclidean metric.
\end{theorem}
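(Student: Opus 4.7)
The plan is to pick a diametral pair in $V$ and use it to define a candidate isometric embedding $f\colon V\to\mathbb{R}$, then exploit the five-point hypothesis to rule out the single configuration that could prevent $f$ from being an isometry. Choose $a,b\in V$ with $d:=\dist(a,b)$ maximal. For any $x\notin\{a,b\}$, the degenerate triangle $\{a,b,x\}$ cannot have $a$ or $b$ in the middle (either case would force a distance exceeding $d$), so $[axb]$, whence $\dist(a,x)+\dist(x,b)=d$. Let $f(v):=\dist(a,v)$; clearly $|f(u)-f(v)|\le\dist(u,v)$, and the proof reduces to showing equality.

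For distinct $x,y\in V\setminus\{a,b\}$, assume without loss of generality that $f(x)\le f(y)$. Degeneracy of $\{a,x,y\}$ leaves three possible orderings: $[axy]$ gives exactly $\dist(x,y)=f(y)-f(x)$; $[ayx]$ is impossible (it would force $x=y$); and the bad ordering $[xay]$ gives $\dist(x,y)=f(x)+f(y)$. A parallel analysis of $\{b,x,y\}$ shows that whenever $[xay]$ holds, $[xby]$ must hold as well; combining $\dist(x,y)=f(x)+f(y)$ with $\dist(x,y)=(d-f(x))+(d-f(y))$ then forces $f(x)+f(y)=d$ and $\dist(x,y)=d$. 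Thus the bad case can occur only when $\{x,y\}$ is itself a diametral pair.

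Suppose toward contradiction that such a bad pair $x,y$ exists. Since $|V|\ge 5$, there is some $z\in V\setminus\{a,b,x,y\}$. Because $\dist(x,y)=d$ is maximal, the degenerate triangle $\{x,y,z\}$ must satisfy $[xzy]$, giving $\dist(x,z)+\dist(z,y)=d$. I then run through the nine combinations of orderings for the degenerate triangles $\{a,x,z\}$ and $\{a,y,z\}$: each reduces to a short linear equation in $f(x),f(y),f(z)$, which, in light of $f(x)+f(y)=d$, forces $f(z)\in\{0,d,f(x),f(y)\}$ and hence $z\in\{a,b,x,y\}$, a contradiction. The principal obstacle is this final case split: in two of the nine combinations, the $a$-triangles alone do not immediately collapse and one must additionally invoke the degenerate triangle $\{b,x,z\}$ or $\{b,y,z\}$ to finish, but that remains a routine finite check driven by the two constraints $f(x)+f(y)=d$ and $\dist(x,z)+\dist(z,y)=d$.
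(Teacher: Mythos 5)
The paper does not actually prove this statement: Theorem~\ref{riri} is quoted from Richmond and Richmond with a citation, so there is no in-house proof to compare yours against. Judged on its own, your argument is correct and complete for \emph{finite} metric spaces. I checked the steps you leave as routine: in the bad case the triangle $\{b,x,y\}$ does force $[xby]$ (the ordering $[byx]$ would give $\dist(x,y)=f(y)-f(x)$, which together with $\dist(x,y)=f(x)+f(y)$ forces $f(x)=0$, i.e., $x=a$), and the nine-way split does close. Seven of the nine combinations reduce, via $\dist(x,z)+\dist(z,y)=d=f(x)+f(y)$, to an immediate contradiction: $f(z)\in\{0,d\}$, $f(x)=0$, $f(y)=0$, or a vanishing distance between distinct points (note that in the cases $f(z)=f(x)$ or $f(z)=f(y)$ it is the vanishing of $\dist(x,z)$ or $\dist(y,z)$, not injectivity of $f$, that yields $z\in\{x,y\}$). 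The two non-collapsing combinations are $\dist(x,z)=f(x)-f(z)$ with $\dist(y,z)=f(y)+f(z)$ and its mirror image; the first is killed by the triangle $\{b,y,z\}$, whose three orderings give $f(z)=0$, or $d+f(y)=f(x)$, or $f(z)=f(x)$ and hence $\dist(x,z)=0$, and the second is killed symmetrically by $\{b,x,z\}$. So the combinatorial core is sound.

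The one genuine gap is your very first step. The theorem as stated (and as Richmond and Richmond prove it) carries no finiteness hypothesis, but ``choose $a,b$ with $\dist(a,b)$ maximal'' presupposes that the diameter is attained; this already fails for bounded infinite spaces such as an open interval. Everything downstream leans on that maximality: it is what forces $[axb]$ for every $x$ and $[xzy]$ for the putative bad pair. To cover the general case you must either rework the argument around an arbitrary base pair (which is where the four-point circular space genuinely intrudes and the case analysis becomes harder), or prove the finite case exactly as you did and then glue: for fixed distinct $a,b$, every finite subset containing $a$, $b$ and at least five points admits a unique isometric embedding into $\mathbb{R}$ sending $a$ to $0$ and $b$ to $\dist(a,b)$, and by that uniqueness these embeddings agree on overlaps and define a global one. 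For the purposes of this paper the finite case is all that is ever used, since Theorem~\ref{riri} is applied only to five-point subspaces, but as a proof of the stated theorem the restriction must be acknowledged or repaired.
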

Here, the lower bound on the number of points cannot be relaxed: consider the distances defined on $\{a,b,c,d\}$ by $\dist(a,b)\!=\!dist(b,c)\!=\!\dist(c,d)\!=\!\dist(d,a)\!=\!1$ and $\dist(a,c)\!=\!\dist(b,d)\!=\!2$.

In the finite case, Szab\' o~\cite[Theorem 1]{Sza22} proved that the hypothesis of Theorem~\ref{riri} can be relaxed as soon as the space has six points and that it can be relaxed further and further as the number of points gets larger and larger. 
\begin{theorem}\label{sza}
If, in a metric space with $n$ points such that $n\ge 6$, at most $n-4$ triangles are nondegenerate, 
then either this metric space is isometric to a subset of the real line with the usual Euclidean metric 
or precisely $n-4$ triangles are nondegenerate and either their intersection consists of two points or $n=6$ and the two triangles are disjoint.
\end{theorem}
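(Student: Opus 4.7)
The plan is to reduce Theorem~\ref{sza} to Theorem~\ref{riri} by finding a large subset $U\subseteq V$ whose induced triangles are all degenerate, applying Theorem~\ref{riri} to embed $U$ isometrically in $\mathbb{R}$, and extending this embedding to all of $V$. Let $H$ denote the $3$-uniform hypergraph on $V$ whose edges are the nondegenerate triangles, so $|E(H)|\le n-4$ by hypothesis.

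First, I would show that $H$ has an independent set $U$ with $|U|\ge 5$, except when $n=6$ and $H$ consists of two disjoint triangles (the first exceptional configuration). A double-counting argument does this: if every $5$-subset of $V$ contained an edge of $H$, then since each such edge lies in exactly $\binom{n-3}{2}$ five-subsets, we would have $|E(H)|\ge \binom{n}{5}/\binom{n-3}{2}=n(n-1)(n-2)/60$. For $n\ge 7$ this strictly exceeds $n-4$, a contradiction; for $n=6$ it equals $2=n-4$, and a short case analysis shows that the only way to saturate the bound is by taking two disjoint edges.

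Second, given $|U|\ge 5$, apply Theorem~\ref{riri} to obtain an isometry $\phi_0\colon U\to\mathbb{R}$ and extend it as follows. Call $v\in V\setminus U$ \emph{bad} if some triangle $\{v,u,u'\}$ with $u,u'\in U$ is nondegenerate; otherwise the $|U|$ equations $\dist(v,u)=|y-\phi_0(u)|$ are mutually consistent and determine a unique $y=\phi_0(v)$, extending the isometry. Each bad vertex corresponds to a distinct edge of $H$ of the form $\{v,u,u'\}$, so there are at most $|E(H)|\le n-4$ bad vertices. For two good vertices $v,v'\in V\setminus U$, a similar counting argument yields $u\in U$ with $\{v,v',u\}$ degenerate, whence $\dist(v,v')=|\phi_0(v)-\phi_0(v')|$. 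If no vertex is bad, the extension embeds $V$ isometrically in $\mathbb{R}$, giving the first alternative of the conclusion.

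If some vertex is bad, I would argue---via the tight bound $|E(H)|\le n-4$ and a careful case analysis on how the edges of $H$ intersect $U$---that the nondegenerate triangles must all share two common vertices $a,b\in V$, forming a sunflower with core $\{a,b\}$ (the second exceptional configuration). Each exceptional configuration contains a nondegenerate triangle and so does not embed in $\mathbb{R}$. The main obstacle is precisely this structural step: proving that every failure of the extension procedure forces the sunflower structure with core of size exactly $2$, and ruling out cores of size $0$, $1$, or $\ge 3$ via the tight count $|E(H)|\le n-4$ together with the relations imposed by the degenerate triangles through bad vertices.
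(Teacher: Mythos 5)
First, a point of reference: the paper does not prove Theorem~\ref{sza} at all --- it is imported from Szab\'o~\cite{Sza22} --- so there is no in-paper proof to compare against, and your attempt must stand on its own. Its opening move is sound: the double count showing that if every $5$-subset contains a nondegenerate triangle then $|E(H)|\ge n(n-1)(n-2)/60$, which exceeds $n-4$ for $n\ge 7$ and is tight at $n=6$ only for two disjoint triangles, correctly produces an independent $5$-set $U$ outside the $n=6$ exception; and extending $\phi_0$ to a single good vertex $v$ by applying Theorem~\ref{riri} to $U\cup\{v\}$ is fine. But the step for two good vertices $v,v'$ is genuinely broken, and it breaks exactly at the configuration the theorem is about. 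The number of nondegenerate triangles $\{v,v',u\}$ with $u\in U$ is bounded only by $n-4$, while you have guaranteed only $|U|\ge 5$, so for $n\ge 10$ the ``similar counting argument'' yields nothing; in the sunflower with core $\{a,b\}$ chosen so that $U$ avoids both $a$ and $b$ (such a $U$ is a perfectly legitimate choice, since every set omitting $a$ is independent), \emph{no} vertex is bad, yet every $\{a,b,u\}$ with $u\in U$ may be nondegenerate --- so your dichotomy ``no bad vertex $\Rightarrow$ the extension embeds $V$'' is false as stated. Moreover, even when a degenerate witness exists, $[vuv']$ with $\phi_0(u)$ outside the interval between $\phi_0(v)$ and $\phi_0(v')$ gives $\dist(v,v')=|\phi_0(v)-\phi_0(u)|+|\phi_0(u)-\phi_0(v')|>|\phi_0(v)-\phi_0(v')|$, so ``whence $\dist(v,v')=|\phi_0(v)-\phi_0(v')|$'' does not follow from a single witness; you would need enough witnesses (e.g.\ three, so that Theorem~\ref{riri} applies to $\{v,v',u_1,u_2,u_3\}$), and nothing in your count delivers them.

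The larger gap is the one you name yourself: the entire extremal analysis --- that any failure forces \emph{exactly} $n-4$ nondegenerate triangles arranged as a sunflower with a two-point core (or the $n=6$ disjoint pair), and in particular that $1\le |E(H)|\le n-5$ is impossible for a non-linear space --- is declared ``the main obstacle'' and left unresolved. That analysis is not a routine verification to be filled in later; it is the substance of the theorem, and the part you have carried out (embeddability when everything is generic) is the easy fraction. As it stands this is a plausible outline with a correct first lemma, not a proof.
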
 
Actually, Theorem 1 of \cite{Sza22}, in addition to dealing also with smaller $n$, provides a complete description of metric spaces with $n$ points and precisely $n-4$ nondegenerate triangles. This description is irrelevant to our discussion except for its special case $n=5$. In making note of its corollary (which is our Lemma~\ref{sza2}), we shall call a four-point metric space {\em linear\/} if it is isometric to a subset of the real line with the usual Euclidean metric. (Readers beware: our meaning of the term ``linear'' is different from that of ~\cite[Definition 1]{Sza22}.) If all four triangles in a four-point metric space  are degenerate and yet the space is not linear, then we shall call it {\em circular\/} (since its points can be labeled as $0,1,2,3$ in such a way that $[012]$, $[123]$, $[230]$, $[301]$). 
\begin{lemma}\label{sza2}
If, in a metric space with five points, precisely one triangle is nondegenerate, 
then this metric space has a four-point circular subspace.
\end{lemma}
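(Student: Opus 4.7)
The plan is to exploit the fact that, among the five $4$-point subspaces of our space, exactly three avoid the unique nondegenerate triangle. Label the points $a,b,c,d,e$ so that $\{c,d,e\}$ is the nondegenerate triangle, and set $S_1=\{a,b,d,e\}$, $S_2=\{a,b,c,e\}$, $S_3=\{a,b,c,d\}$. Because each $S_i$ omits one of $c,d,e$, all four of its triangles are degenerate, so by definition each $S_i$ is either linear or circular. It suffices to rule out the possibility that all three are linear, so I would assume for contradiction that $S_1,S_2,S_3$ are all linear and deduce that the entire five-point space embeds isometrically into $\mathbb{R}$; this would force $\{c,d,e\}$ to be degenerate and yield the desired contradiction.

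The construction stitches together isometric embeddings $\phi_i\colon S_i\to\mathbb{R}$. Using that any two isometric embeddings of a finite subset of $\mathbb{R}$ into $\mathbb{R}$ differ by an isometry of the line, I would first align $\phi_2$ and $\phi_3$ so that they agree on $\{a,b,c\}$, and then align $\phi_1$ so that it agrees with $\phi_3$ on $\{a,b\}$. At this stage the points $a,b,c$ have well-defined positions $\alpha,\beta,\gamma$, while $d$ has two a priori positions $\phi_3(d)$ and $\phi_1(d)$, and $e$ has two a priori positions $\phi_2(e)$ and $\phi_1(e)$.

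The technical step is to check that these coincide: $\phi_1(d)=\phi_3(d)$ and $\phi_1(e)=\phi_2(e)$. For $d$, the identities $|\alpha-\phi_1(d)|=\dist(a,d)=|\alpha-\phi_3(d)|$ and $|\beta-\phi_1(d)|=\dist(b,d)=|\beta-\phi_3(d)|$ leave two candidates for $\phi_1(d)$ per equation, namely $\phi_3(d)$ and its reflection through $\alpha$, respectively through $\beta$; since $\alpha\ne\beta$ the two reflections disagree, so $\phi_1(d)=\phi_3(d)$ is forced. The same argument handles $e$. Consequently all five points sit at consistent positions $\alpha,\beta,\gamma,\delta,\varepsilon\in\mathbb{R}$, and every pairwise distance in the space, including the three among $c,d,e$, equals the corresponding Euclidean distance on the line. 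Since three real numbers always satisfy one instance of the triangle equality, $\{c,d,e\}$ is degenerate, contradicting our choice of nondegenerate triangle.

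The main obstacle I anticipate is precisely this rigidity step: one must use that $a\ne b$ to kill the reflection ambiguity when transporting the position of $d$ (and of $e$) between different embeddings. Everything else amounts to bookkeeping with isometries of $\mathbb{R}$.
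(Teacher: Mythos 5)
Your proof is correct, but it does not follow the paper's route, because the paper gives no internal proof of this lemma at all: it is quoted as a corollary of the complete classification, in Theorem~1 of~\cite{Sza22}, of metric spaces with $n$ points and exactly $n-4$ nondegenerate triangles. Your argument is a self-contained, elementary replacement for that citation. You correctly note that each of the three four-point subspaces $S_1,S_2,S_3$ avoiding the unique nondegenerate triangle $\{c,d,e\}$ has all four of its triangles degenerate, hence is linear or circular by the paper's definition, and you rule out the all-linear case by gluing the three embeddings into a single isometric embedding of the whole five-point space into $\mathbb{R}$. The rigidity step is sound: after normalizing so that the embeddings agree at the two distinct points $a,b$ (legitimate, since an isometry between subsets of $\mathbb{R}$ extends to an isometry of $\mathbb{R}$), the position of any further common point is pinned down by its distances to $\alpha$ and $\beta$, because the reflections through $\alpha$ and through $\beta$ differ when $\alpha\neq\beta$. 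Each of the ten pairwise distances is witnessed by at least one $S_i$, so all of them are realized on the line, and since one of three distinct reals always lies between the other two, $\{c,d,e\}$ would be degenerate --- the desired contradiction. What your approach buys is independence from the external classification theorem (and it meshes naturally with how the lemma is deployed in the proof of Theorem~\ref{gg}, where precisely the three subspaces avoiding the suspect triangle are shown to be linear); what the paper's approach buys is only brevity.
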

Let us call a set $A$ of three-point subsets of a set $V$ an {\em anchor in $V$\/} if, for every metric space $(V,\dist)$, the assumption that all triangles in $A$ are nondegenerate implies that $(V,\dist)$ is isometric to a subset of the real line with the usual Euclidean metric. In this terminology, Theorems~\ref{riri} and~\ref{sza} show that every set of $\binom{n}{3}-n+5$ three-point subsets of an $n$-point set $V$ is an anchor in $V$. The purpose of this note is to point out that there are far smaller anchors:
\begin{theorem}\label{gg}
In every $n$-point set where $n\neq 4$, there is an anchor of fewer than $\frac{7}{6}n^2$ sets. 
\end{theorem}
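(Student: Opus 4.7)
The plan is to construct an anchor $A \subseteq \binom{V}{3}$ in three phases. First, fix a 5-point subset $S \subseteq V$ and include in $A$ all $\binom{5}{3}=10$ triangles of $S$; by Theorem~\ref{riri}, any metric on $V$ for which all triangles of $A$ are degenerate makes $S$ isometric to a 5-point subset of $\mathbb{R}$, say with positions $p_1<\cdots<p_5$. Second, for each $v \in V\setminus S$, add to $A$ the $\binom{5}{2}=10$ triangles $\{v, s, s'\}$ with $s, s'\in S$; applying Theorem~\ref{riri} to the 6-point subspace $S\cup\{v\}$ (whose $20$ triangles are now all in $A$) forces it to be linear, placing $v$ at a unique position $q_v \in \mathbb{R}$ satisfying $\dist(v, s_i)=|q_v - p_i|$ for every $s_i \in S$. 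Third, for each pair $\{v, w\}\subseteq V\setminus S$, add two triangles $\{v, w, s\}$ and $\{v, w, s'\}$ with $s, s' \in S$, chosen so that their joint degeneracies, combined with the metric axioms, force $\dist(v, w)=|q_v - q_w|$; this then extends the isometry of $S$ to a full isometry of $V$ into $\mathbb{R}$.

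The total count is $10 + 10(n-5) + 2\binom{n-5}{2} = n^2 - n - 10$, which is less than $\tfrac{7}{6}n^2$ for every $n \ge 1$, with slack $\tfrac{1}{6}n^2 + n + 10$. For the small cases $n \in \{1,2,3,5,6,7,8,9\}$ (the theorem excludes $n=4$), one verifies directly that $\binom{n}{3} < \tfrac{7}{6}n^2$, so taking $A = \binom{V}{3}$ yields an anchor of the required size --- by Theorem~\ref{riri} when $n \ge 5$, and by elementary reasoning for $n \le 3$.

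The main obstacle lies in the third phase. A single degenerate triangle $\{v, w, s\}$ confines $\dist(v, w)$ to one of two candidate values: the correct $|q_v - q_w|$, or a spurious alternative $|q_v - p_s| + |q_w - p_s|$ (or its counterpart with the opposite sign) coming from a wrong betweenness configuration. Two such triangles almost always intersect in the single value $|q_v - q_w|$, but in the exceptional configuration $q_v + q_w = p_s + p_{s'}$ (so $v, w$ are symmetric about the midpoint of the two chosen references), both candidate pairs share a spurious common value $p_{s'} - p_s$ in addition to $|q_v - q_w|$. The intended remedy is to invoke the triangle inequality with a third reference $t \in S$ satisfying $q_v < p_t < q_w$: this yields $\dist(v, w) \le \dist(v, t) + \dist(t, w) = |q_v - q_w|$ and excludes the spurious value. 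The narrow subset of configurations in which $S$ contains no such intermediate $t$ can be handled either by choosing the two references for each pair $\{v, w\}$ adaptively (based on some fixed labeling of the pair), or by occasionally including a third reference triangle; the $\tfrac{1}{6}n^2$ of slack in the count comfortably absorbs these extras.
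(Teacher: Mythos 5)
Your phases 1 and 2 are sound, but phase 3 contains a genuine gap that cannot be repaired within your budget. An anchor is a fixed family of triples that must work for \emph{every} metric on $V$, so the two references $s,s'$ assigned to a pair $\{v,w\}$ must be chosen before the positions $p_1<\dots<p_5$ and $q_v,q_w$ exist; both of your proposed remedies (choosing the references ``adaptively'' and adding a third reference only ``occasionally'') presuppose knowledge of the metric. Worse, for \emph{every} fixed choice of $\{s,s'\}$ an adversary can realize the exceptional configuration: order $S$ so that $s$ and $s'$ are adjacent, say at positions $0$ and $2$, place all points of $V\setminus\{v,w\}$ at positions outside the open interval $(0,2)$, and set $\dist(v,x)=\dist(w,x)=\abs{1-q_x}$ for every $x\neq v,w$ together with $\dist(v,w)=2$. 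This is a metric (the only inequalities at risk are $\dist(v,w)\le \dist(v,x)+\dist(x,w)=2\abs{1-q_x}$, which hold because no $q_x$ lies in $(0,2)$); every triangle of your $A$ is degenerate ($\{v,w,s\}$ and $\{v,w,s'\}$ satisfy $[vsw]$ and $[vs'w]$, and every other triangle of $A$ behaves exactly as on the line); yet $\{v,w,t\}$ is nondegenerate for every $t$ with $q_t\notin\{0,2\}$, so the space is not linear and your $A$ is not an anchor. Three references per pair do always suffice (a common spurious value would force $p_s+p_{s'}=p_s+p_{s''}=q_v+q_w$, hence $p_{s'}=p_{s''}$), but since you cannot tell in advance which pairs need the third reference you must give it to all of them, and $10+10(n-5)+3\binom{n-5}{2}=\tfrac{3}{2}n^2-O(n)$ exceeds $\tfrac{7}{6}n^2$.

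The paper avoids coordinates and this whole difficulty. It fixes a $2$-set $Y=\{a,b\}$, puts into the anchor all triangles meeting $Y$ together with a Fort--Hedlund covering design $F$ of roughly $(n-2)^2/6$ triples on $X=V\setminus Y$ in which every pair of $X$ lies in some member. For a triangle $\{x,y,z\}$ outside the anchor it picks $u,v,w$ with $\{u,y,z\},\{x,v,z\},\{x,y,w\}\in F$, applies Theorem~\ref{riri} to the five-point sets $\{a,b,u,y,z\}$, $\{a,b,x,v,z\}$, $\{a,b,x,y,w\}$ to conclude that the three four-point subspaces of $\{a,b,x,y,z\}$ not containing $\{x,y,z\}$ are linear, and then invokes Lemma~\ref{sza2} to force $\{x,y,z\}$ to be degenerate. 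The structural ingredient your argument lacks is precisely such a tool (Lemma~\ref{sza2}) for converting linearity of surrounding four-point subspaces into degeneracy of an uncovered triangle, which is what lets the quadratic term stay at $(1+\tfrac16)n^2$ rather than $\tfrac32 n^2$.
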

\noindent (The circular metric space shows that there are no anchors in four-point sets.)
\begin{proof}
Given a positive integer $n$ other than $4$, consider a set $V$ of size $n$. Since the set of all three-point subsets of $V$ is an anchor in $V$, we may assume that $n\ge 10$: otherwise $\binom{n}{3}<\frac{7}{6}n^2$. We shall rely on the following theorem of Fort and Hedlund~\cite[Theorem 1]{FH58}: 
\begin{quotation}
\noindent{\em For every integer $m$ such that $m\ge 3$ and for every set $X$ of size $m$ there is a family $F$ of 
\[
\left\lceil \frac{m}{3}\left\lceil \frac{m-1}{2}\right\rceil\right\rceil
\]
three-point subsets of $X$ such that every two-point subset of $X$ is contained in a member of $F$.}
\end{quotation}
(Actually, Fort and Hedlund also proved that the size of $F$ in their theorem cannot be reduced.) We shall apply this theorem to a subset $X$ of $V$ such that $\abs{X}=n-2$. Set $Y=V-X$ and let $A$ denote the union of $F$ and all triangles in $V$ that intersect $Y$. We have 
\[ 
\abs{A}=\left\lceil \frac{n-2}{3}\left\lceil \frac{n-3}{2}\right\rceil\right\rceil+(n-2)+2\binom{n-2}{2}
=\left\lceil \frac{n-2}{3}\left\lceil \frac{n-3}{2}\right\rceil\right\rceil+(n-2)^2.
\]
It is a routine matter to verify that
\[
\left\lceil \frac{m}{3}\left\lceil \frac{m-1}{2}\right\rceil\right\rceil \le 
\left\lceil \frac{m^2}{6}\right\rceil\le \frac{m^2+2}{6}
\]
for all $m$. It follows that
\[
\abs{A}\le \frac{(n-2)^2+2}{6}+(n-2)^2<\frac{7}{6}n^2.
\]
\noindent We are going to prove that $A$ is an anchor in $V$. For this purpose, consider an arbitrary metric space $(V,\dist)$ such that all triangles in $A$ are degenerate. Our task is to show that every triangle outside $A$ is also degenerate. Since the triangle is outside $A$, it is disjoint from $Y$. Label its three points as $x,y,z$ and label the two points of $Y$ as $a,b$. 
By definition of $F$, there are (not necessarily distinct) points $u,v,w$ in $X$ such that $\{u,y,z\}\in F$, $\{x,v,z\}\in F$, $\{x,y,w\}\in F$.\\
Theorem~\ref{riri} applied to $(\{a,b,u,y,z\},\dist)$ shows that $(\{a,b,y,z\},\dist)$ is linear;\\
Theorem~\ref{riri} applied to $(\{a,b,x,v,z\},\dist)$ shows that $(\{a,b,x,z\},\dist)$ is linear;\\
Theorem~\ref{riri} applied to $(\{a,b,x,y,w\},\dist)$ shows that $(\{a,b,x,y\},\dist)$ is linear.\\
To summarize, each of the four-point subspaces of $(\{a,b,x,y,z\},\dist)$ that do not contain the triangle $\{x,y,z\}$ is linear. This, combined with Lemma~\ref{sza2}, implies that $\{x,y,z\}$ is degenerate.
\end{proof}

We do not know whether or not the coefficient $\frac{7}{6}$ in Theorem~\ref{gg} can be reduced. Nevertheless, we do know that it cannot be reduced below $\frac{1}{2}$: 
\begin{theorem}\label{peter}
Every anchor in a set of size $n$, with $n \geq 5$, includes at least $n(n-1)/2$ triangles.
\end{theorem}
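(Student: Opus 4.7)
The plan is to exhibit, for every pair $\{u,v\} \subseteq V$ and every $2$-subset $\{w_1,w_2\} \subseteq V \setminus \{u,v\}$, a non-linear metric on $V$ whose non-degenerate triangles are exactly those of the form $\{u,v,w\}$ with $w \in V \setminus \{u,v,w_1,w_2\}$. Granted such a family of metrics, if $A$ is an anchor then for every pair $\{u,v\} \subseteq V$ the set $\{w : \{u,v,w\} \in A\}$ must have at least $3$ elements: otherwise one could pick a $2$-subset $\{w_1,w_2\}$ containing it, and the corresponding metric would be non-linear while having no non-degenerate triangle in $A$, contradicting the anchor property. Summing this inequality over all $\binom{n}{2}$ pairs and observing that every triangle of $A$ is counted in exactly three summands gives $3\abs{A} \ge 3\binom{n}{2}$, that is, $\abs{A} \ge n(n-1)/2$.

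The construction itself is a one-parameter linear perturbation. Fix reals $L > \delta > 0$ and place $u$, $w_1$, $w_2$, $v$ at positions $0$, $\delta/2$, $L-\delta/2$, $L$ respectively on the real line, with the remaining $n-4$ points placed at pairwise distinct positions in the open interval $(\delta/2, L-\delta/2)$. Define $\dist(p,q)$ to be the absolute difference of the positions of $p$ and $q$ for every pair except $\dist(u,v) := L-\delta$. A routine check shows that every position lies in $[\delta/2, L-\delta/2]$, with equality precisely at $w_1$ and $w_2$; this is exactly the content of the two non-trivial triangle inequalities involving the modified pair, and it simultaneously identifies $\{u,v,w_1\}$ and $\{u,v,w_2\}$ as the only triangles of the form $\{u,v,z\}$ whose sides $L-\delta$, $t$, $L-t$ satisfy an additive relation. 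Triangles not containing both $u$ and $v$ inherit the unmodified linear metric, so they are automatically degenerate. Since $n \ge 5$, at least one non-degenerate triangle survives, and hence the metric is non-linear.

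The heart of the construction is the calibration of the perturbation: the two positions $\delta/2$ and $L - \delta/2$ are exactly the values at which the triangle inequality for $\{u,v,z\}$ becomes an equality under the shrunk distance $\dist(u,v) = L - \delta$, which is precisely why assigning these slots to the two designated companions $w_1, w_2$ forces the desired pair of companion triangles to degenerate while leaving the other $n-4$ non-degenerate. Once this calibration is in place the counting argument is immediate, and no further input is needed beyond Theorem~\ref{riri}, which is tacit in the word ``non-linear''.
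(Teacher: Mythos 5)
Your proof is correct, and its skeleton---a double-counting argument showing that an anchor must cover every pair $\{u,v\}$ by at least three triangles, followed by the construction of a witness metric for any under-covered pair---is exactly the skeleton of the paper's proof (the paper phrases the count contrapositively: a family of fewer than $n(n-1)/2$ triangles yields fewer than $3n(n-1)/2$ incidences $(\{x,y\},z)$, so some pair lies in at most two members of the family). Where you genuinely differ is the witness metric. The paper takes the shortest-path metric of a graph, namely a four-cycle on $u,w_1,v,w_2$ with a path through the remaining points attached at $w_2$; you instead shrink a single distance in a line metric, placing everything on $[0,L]$ and setting $\dist(u,v)=L-\delta$, calibrated so that the triangle inequality for $\{u,v,z\}$ becomes tight exactly at the two positions $\delta/2$ and $L-\delta/2$ occupied by $w_1,w_2$. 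Both constructions realize the same combinatorial outcome---the nondegenerate triangles are precisely $\{u,v,z\}$ with $z\notin\{w_1,w_2\}$, and $\{u,v,w_1,w_2\}$ induces a circular subspace---but yours reduces the verification of the metric axioms and of degeneracy to a one-line computation at the cost of a free parameter, while the paper's is integer-valued and needs no calibration. Two small points to tidy up: the claim that ``every position lies in $[\delta/2,L-\delta/2]$'' should exclude $u$ and $v$ themselves (what matters is that the points other than $u,v$ sit in that interval, with equality exactly at $w_1,w_2$); and Theorem~\ref{riri} is not in fact used anywhere---to conclude non-linearity you only need the trivial direction, that a subspace of the real line has all its triangles degenerate.
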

\begin{proof}
Given a set $V$ of size $n$ and a set $C$ of fewer than $n(n-1)/2$ triangles in $V$, we shall find a metric space on ground set $V$ where all triangles in $C$ are degenerate, but at least one triangle is nondegenerate. For this purpose, note that there are fewer than $3n(n-1)/2$ pairs $(\{x,y\},z)$ such that $\{x,y,z\}\in C$, and so some $\{x,y\}$ belongs to fewer than $3$ of these pairs. To put it differently, there are pairwise distinct points $x,y,z_1,z_2$ in $V$ such that $\{x,y,z\}\in C$ only if $z=z_1$ or $z=z_2$.
Having enumerated the remaining points of $V$ as $z_3, z_4, \ldots z_{n-2}$, consider the metric space induced by the graph with vertex set $V$ and edges $z_1x,\; xz_2,\; z_1y,\; yz_2,\; z_2z_3,\; z_3z_4, \ldots z_{n-3}z_{n-2}$: there, a triangle is nondegenerate if and only if it is one of $\{x,y,z_3\}$, $\{x,y,z_4\}$, \ldots $\{x,y,z_{n-2}\}$. 
\end{proof}

All our theorems extend to the context of {\em pseudometric betweenness\/} defined in~\cite[p.~643]{BBC13}, which is more general than the context of metric spaces. An early draft of this note appeared as~\cite{CK22} with a discussion of {\em weakly saturated hypergraphs\/} and their relation to anchors.

\end{document}